\DeclareMathAlphabet{\mathscr}{LS1}{stix2scr}{m}{n}
\newtheorem{theorem}{Theorem}
\newtheorem*{theorem*}{Theorem}
\newtheorem*{remark}{Remark}
\newtheorem{lemma}{Lemma}
\theoremstyle{definition}
\theoremstyle{notation}
\theoremstyle{corollary}
\newtheorem*{corollary}{Corollary}
\newtheorem*{conj}{Conjecture}
\author{Yazan Alamoudi}
\address{Department of Mathematics\\
University of Florida\\
Gainesville\\
FL 32611\\
United States
}
\email{yazanalamoudi@ufl.edu}
\title{On a nonnegativity conjecture of Andrews}
\subjclass[2000]{11P81, 11P83, 05A17}
\begin{document}

\maketitle
\begin{abstract}
I settle a conjecture of Andrews related to the Alladi-Schur polynomials. In addition, I give further relations and implications to two families of polynomials related to the Alladi-Schur polynomials.
\end{abstract}

\section{Introduction}
The conjecture in question relates to polynomials arising in an investigation of integer partitions. That investigation \cite{And1} is what led to Andrews' refinement\footnote{The Alladi-Schur theorem is the statement that the number of partitions of $n$ into odd parts, each occurring at most twice, is equal to the number of Schur partitions of $n$. This was first observed by K. Alladi and communicated to G. Andrews (see \cite{And1}).} of the Alladi-Schur theorem, which is the equation \[|\mathcal{C}(m,n)|=|\mathcal{D}(m,n)|\tag{1.1},\] where $\mathcal{C}(m,n)$ is the set of partitions of $n$ into $m$ odd parts, each occurring at most twice, and $\mathcal{D}(m,n)$ is the set of Schur partitions $\pi$ of $n$ where the number of parts plus the number of even parts of $\pi$ is $m$. Recall that Schur partitions are partitions into parts that differ by at least $3$ with no consecutive multiples of $3$. Of relevance is that Andrews proved (1.1) using some recursive relations (see \cite{And1} and \cite{And2}) of the Alladi-Schur polynomials  \[d_N(x)=\sum_{m,n\geq0}D_N(m,n)x^mq^n,\]
where $D_N(m,n)=|\mathcal{D}_N(m,n)|$ and $\mathcal{D}_N(m,n)$ is the set of partitions in $\mathcal{D}(m,n)$ with parts $\leq N$. More relevant are those relations Andrews gives in \cite{And2}, such as $p_n(x)\mid d_{6n-1}(x)$ and its explicit form
\[\frac{d_{6n-1}(x)}{p_n(x)}=\sum_{i=0}^nc(n,i)x^i\tag{1.2},\]
where $p_n(x)=\prod_{i=1}^n(1+xq^{2i-1}+x^2q^{4i-2})$. Indeed, Andrews' conjecture can now be presented. 
\begin{conj}[Andrews' conjecture stated in \cite{And2}]
For all $n$ and $j$, $c(n,j)$ has nonnegative coefficients.    
\end{conj}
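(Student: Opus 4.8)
The plan is to recast the nonnegativity of each $c(n,j)$ as nonnegativity of the coefficients of an auxiliary family, and to prove the latter by a coupled induction. Write $C_n(x):=\sum_j c(n,j)x^j=d_{6n-1}(x)/p_n(x)$ (using (1.2)); it suffices to show that $C_n(x)$, viewed as a polynomial in $x$ and $q$, has nonnegative coefficients. The basic tool is the recursion obtained by removing the largest part from a Schur partition with bounded parts,
\[
d_N(x)=d_{N-1}(x)+x^{\epsilon_N}q^{\,N}\,d_{N-\delta_N}(x),\qquad d_N(x)=1\ \ (N\le 0),
\]
where $\epsilon_N\in\{1,2\}$ according to the parity of $N$ and $\delta_N\in\{3,4\}$ according as $3\nmid N$ or $3\mid N$ (the shift jumps to $4$ precisely because a multiple of $3$ cannot sit directly above the preceding multiple of $3$). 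This recursion has period $6$, which explains the role of the window $6n-1$.

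First I would prove the divisibility lemma
\[
d_{6n-4}(x)=(1+xq+x^2q^2)\,d_{6n-7}(q^2x),
\]
by induction from the recursion; equivalently, since $p_{n-1}(q^2x)=p_n(x)/(1+xq+x^2q^2)$, this says $d_{6n-4}(x)/p_n(x)=C_{n-1}(q^2x)$. Feeding this back into the recursion forces $p_n(x)\mid d_{6n-2}(x)$ and $p_n(x)\mid d_{6n+1}(x)$, so we may define the two auxiliary families $B_n(x):=d_{6n-2}(x)/p_n(x)$ and $A_n(x):=d_{6n+1}(x)/p_n(x)$. Dividing the appropriate instances of the recursion by $p_n(x)$ then yields the linear relations
\[
C_n=B_n+xq^{6n-1}\,C_{n-1}(q^2x),\qquad A_n=C_n+xq^{6n+1}\,B_n+x^2q^{6n}\,C_{n-1}(q^2x),
\]
each of which carries nonnegativity of coefficients from the right side to the left. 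What couples the system is the identity
\[
\bigl(1+xq^{2n-1}+x^2q^{4n-2}\bigr)\bigl(B_n-C_{n-1}(q^2x)\bigr)=xq^{6n-3}\bigl(C_{n-1}(x)+xq\,A_{n-1}(x)\bigr),
\]
which I would verify by clearing $p_{n-1}(x)$ and expanding both sides through the recursion. Since $B_n-C_{n-1}(q^2x)$ is a polynomial, it follows that $1+xq^{2n-1}+x^2q^{4n-2}$ divides $C_{n-1}(x)+xq\,A_{n-1}(x)$; writing $\beta_n(x)$ for the quotient, we get $B_n(x)=C_{n-1}(q^2x)+xq^{6n-3}\,\beta_n(x)$.

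With these in hand the induction runs on the triple $(A_n,B_n,C_n)$. For $n=0$ one has $A_0=1+xq$ and $B_0=C_0=1$, all with nonnegative coefficients. Assume the same for $A_{n-1},B_{n-1},C_{n-1}$; then $C_{n-1}(x)+xq\,A_{n-1}(x)$ has nonnegative coefficients, and — granting the main lemma that its quotient $\beta_n(x)$ by $1+xq^{2n-1}+x^2q^{4n-2}$ also has nonnegative coefficients — the three displayed relations give, in turn, nonnegativity for $B_n$, then for $C_n$ and $A_n$. Extracting the coefficient of $x^j$ in $C_n$ then settles Andrews' conjecture.

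The main obstacle is exactly this last lemma: that $C_{n-1}(x)+xq\,A_{n-1}(x)$ is divisible by $1+xq^{2n-1}+x^2q^{4n-2}$ with a quotient having nonnegative coefficients (equivalently, that $p_n(x)\mid\bigl(d_{6n-7}(x)+xq\,d_{6n-5}(x)\bigr)$ with nonnegative quotient). The divisibility itself is no harder than Andrews' proof of (1.2); the subtle point is positivity of the quotient, because division by $1+t+t^2$ does not preserve nonnegativity of coefficients, so one cannot simply cite the (already established) positivity of $C_{n-1}+xq\,A_{n-1}$. I would try to settle it by exhibiting an explicit closed form for $d_{6n-1}(x)$ — of Gaussian-binomial / Rogers--Ramanujan--Slater type — in which the factor $1+xq^{2n-1}+x^2q^{4n-2}$ is visibly present in $C_{n-1}(x)+xq\,A_{n-1}(x)$, so that positivity of $\beta_n$ reduces to a term-by-term check; failing that, I would push a secondary induction in which $\beta_n$ is rewritten through the level-$(n-1)$ polynomials $A_{n-1},B_{n-1},C_{n-2}$ and the cancellation by $1+xq^{2n-1}+x^2q^{4n-2}$ is followed monomial-by-monomial, using the fine structure of these polynomials rather than mere positivity.
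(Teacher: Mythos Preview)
Your setup is essentially the paper's: your $C_n,B_n,A_n$ are the paper's $\mathscr{d}_{6n-1},\mathscr{d}_{6n-2},\mathscr{d}_{6n+1}$, and your two displayed recursions for $C_n$ and $A_n$ are exactly the relevant instances of the paper's Lemma~2. The genuine gap is precisely where you flag it, the passage to $B_n$, but the obstacle is self-inflicted. You already wrote down the divisibility lemma $d_{6n-4}(x)=(1+xq+x^2q^2)\,d_{6n-7}(q^2x)$; the point you are missing is that this identity is not special to the residue $6n-4$ but holds for \emph{every} even index: $d_{2N}(x)=(1+xq+x^2q^2)\,d_{2N-3}(xq^2)$ for all $N\ge 3$ (this is Andrews' relation, and is the paper's Lemma~1 after dividing by $p$). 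Applying it at $2N=6n-2$ instead of $2N=6n-4$ gives
\[
B_n(x)=\frac{d_{6n-2}(x)}{p_n(x)}=\frac{d_{6n-5}(xq^2)}{p_{n-1}(xq^2)}=A_{n-1}(xq^2),
\]
so nonnegativity of $B_n$ is immediate from that of $A_{n-1}$, with no division by $1+xq^{2n-1}+x^2q^{4n-2}$ and hence no need for your $\beta_n$ or the ``main lemma'' at all. With this one substitution identity in place, your induction on $(A_n,B_n,C_n)$ closes cleanly and coincides with the paper's proof.

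In short: drop the coupling identity for $B_n-C_{n-1}(q^2x)$ and replace it by $B_n(x)=A_{n-1}(xq^2)$. Then your three relations read $B_n(x)=A_{n-1}(xq^2)$, $C_n=B_n+xq^{6n-1}C_{n-1}(q^2x)$, $A_n=C_n+xq^{6n+1}B_n+x^2q^{6n}C_{n-1}(q^2x)$, each manifestly positivity-preserving, and the induction goes through with the base $A_0=1+xq$, $B_0=C_0=1$.
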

 I resolve the above conjecture by a very similar approach to the one Andrews used to prove his factorization theorem for $d_n(x)$. The aforementioned theorem \cite[Thm 1.2]{And2} states that $p_{\lfloor\frac{n+4}{6}\rfloor}(x)\mid d_n(x)$ for $n\not\equiv 3\pmod6$, and $p_{\lfloor\frac{n-2}{6}\rfloor}(x)\mid d_n(x)$ otherwise.
It is easy to see that this result can be written as  \[p_{\lceil\frac{n+3\chi_o(n)}{6}\rceil -\chi_o(n)}(x)\mid d_n(x),\]
where $\chi_o$ is the indicator function for odd integers. To this end, define\footnote{In \cite{And2} Andrews has instead written $\Delta(i,n,x,q)$,  with $i\in\{-1,0,1,2,3,4\}$, in place of what I write as $\mathscr{d}_{6n-i}(x)$.} the polynomials $\mathscr{d}_n$ as \[\mathscr{d}_n(x)=\frac{d_n(x)}{p_{\lceil\frac{n+3\chi_o(n)}{6}\rceil-\chi_o(n)}(x)}.\tag{1.3}\]
In due course, I establish the validity of Andrews' conjecture by proving the following stronger claim.

\begin{theorem}
    For $n\geq1$, $\mathscr{d}_{n}$ is a polynomial in $x$ and $q$ with nonnegative integer coefficients. Hence, Andrews' conjecture is true.  
\end{theorem}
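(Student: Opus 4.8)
The plan is to follow the template of Andrews' proof of his factorization theorem \cite[Thm~1.2]{And2}. Regard the normalized polynomials $\mathscr{d}_n$ of (1.3) as six interleaved families indexed by $n\bmod 6$ — these are Andrews' $\Delta(i,n,x,q)=\mathscr{d}_{6n-i}(x)$, $i\in\{-1,0,1,2,3,4\}$ — and produce, for each residue class, a finite recurrence
\[
\mathscr{d}_n(x)=\sum_{j\ge 1}A_j(x,q)\,\mathscr{d}_{n-j}(x),\qquad A_j\in\mathbb{Z}_{\ge 0}[x,q],
\]
in which every coefficient $A_j$ is a manifestly nonnegative polynomial: a monomial $x^aq^b$, a trinomial factor $1+xq^{2i-1}+x^2q^{4i-2}$, or a product of such. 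Granting such a system, the theorem is immediate by induction on $n$ once a bounded list of seed values is checked, and Andrews' conjecture follows at once: for the subscript $6n-1$ (odd, so $\chi_o=1$) the exponent appearing in (1.3) is $\lceil\frac{6n+2}{6}\rceil-1=n$, so $\mathscr{d}_{6n-1}(x)=d_{6n-1}(x)/p_n(x)$, which by (1.2) equals $\sum_{i=0}^n c(n,i)x^i$; hence the $c(n,i)$ are precisely the ($q$-polynomial) coefficients of $\mathscr{d}_{6n-1}(x)$.

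I would begin from the elementary recurrence for $d_n(x)$ obtained by recording whether the largest admissible part $n$ occurs in a Schur partition counted by $d_n$: removing it leaves an arbitrary Schur partition with parts $\le n-3$ when $3\nmid n$ and $\le n-4$ when $3\mid n$, so $d_n(x)=d_{n-1}(x)+x^{a}q^{n}\,d_{n-3-\varepsilon}(x)$ with $a\in\{1,2\}$ and $\varepsilon=\chi_{3\mid n}$ (using the convention $d_m=1$ for $m\le 0$). All coefficients here are already nonnegative, so the entire difficulty is that dividing through by $p_{k(n)}$, with $k(n)=\lceil\frac{n+3\chi_o(n)}{6}\rceil-\chi_o(n)$, need not leave each summand a polynomial. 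Evaluating $k$ on residue classes shows that $k(n)-k(n-1)$ equals $+1$ for $n\equiv 2,4\pmod 6$, equals $-1$ for $n\equiv 3\pmod 6$, and equals $0$ otherwise, while $k(n)-k(n-3-\varepsilon)$ equals $+1$ for $n\equiv 2,4\pmod 6$ and $0$ otherwise. Whenever the relevant jumps are $\le 0$ — that is, for $n\not\equiv 2,4\pmod 6$ — dividing the $d$-recurrence by $p_{k(n)}$ merely \emph{multiplies} the lower $\mathscr{d}$'s by extra trinomial factors and yields a nonnegative recurrence for free, e.g.\ $\mathscr{d}_n=\mathscr{d}_{n-1}+x^2q^n\mathscr{d}_{n-4}$ for $n\equiv 0\pmod 6$ and $\mathscr{d}_n=(1+xq^{2k(n-1)-1}+x^2q^{4k(n-1)-2})\,\mathscr{d}_{n-1}+xq^n\mathscr{d}_{n-4}$ for $n\equiv 3\pmod 6$.

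All the work is concentrated in the classes $n\equiv 2,4\pmod 6$, where both jumps are $+1$. Writing $T_k=1+xq^{2k-1}+x^2q^{4k-2}$ and $k=k(n)$, the $d$-recurrence there only gives $T_k\,\mathscr{d}_n(x)=\mathscr{d}_{n-1}(x)+x^aq^n\mathscr{d}_{n-3}(x)$ — i.e.\ $\mathscr{d}_n$ presented as a \emph{quotient} by $T_k$, equivalently a recurrence carrying a minus sign, from which nonnegativity cannot be read off directly. So the main obstacle is to show this quotient is again a nonnegative polynomial by exhibiting an explicit nonnegative form for it: one wants companion identities asserting that $\mathscr{d}_{n-1}(x)+x^aq^n\mathscr{d}_{n-3}(x)$ is divisible by $T_{k(n)}$ \emph{with nonnegative quotient}, and proving these is exactly where Andrews' divisibility argument must be pushed one further notch and the quotient read off — uniformly over all six classes, and without a stray negative coefficient ever surfacing (sample values such as $\mathscr{d}_4=1+xq^3$ and $\mathscr{d}_8=1+xq^5+xq^7$ show the quotients behave well but are not produced by any single obvious substitution). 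Once these nonnegative recurrences are secured for all six classes, one verifies the finitely many seeds $\mathscr{d}_1,\dots,\mathscr{d}_{N_0}$ have nonnegative coefficients by direct expansion, closes the induction, and concludes Andrews' conjecture as above.
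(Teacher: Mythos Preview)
Your analysis is correct up to the point where you isolate the residues $n\equiv 2,4\pmod 6$ as the sole obstruction, but then the argument stops: you promise ``companion identities'' for these two classes without producing any, and you explicitly assert that the sample quotients $\mathscr{d}_4$, $\mathscr{d}_8$ ``are not produced by any single obvious substitution.'' That is the gap, and it is exactly the place where the paper's proof has content beyond what you have written; everything else in your outline (the four unproblematic classes, the induction frame, the deduction of Andrews' conjecture from the case $6n-1$) matches the paper.

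The missing identity \emph{is} a single substitution. Andrews proved in \cite{And2} that $d_{2N}(x)=(1+xq+x^2q^2)\,d_{2N-3}(xq^2)$; dividing by $p_{\lceil 2N/6\rceil}(x)$ and using $p_k(x)=(1+xq+x^2q^2)\,p_{k-1}(xq^2)$ yields
\[
\mathscr{d}_{2N}(x)=\mathscr{d}_{2N-3}(xq^2)\qquad(N\ge 3),
\]
which is the paper's Lemma~1. Your own data confirm it: $\mathscr{d}_4(x)=\mathscr{d}_1(xq^2)=1+xq^3$ and $\mathscr{d}_8(x)=\mathscr{d}_5(xq^2)=1+xq^5+xq^7$. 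This one relation handles \emph{all} even indices at once --- in particular the troublesome classes $n\equiv 2,4\pmod 6$ --- by reducing them to a strictly smaller odd index, after which the odd-index recurrences you already wrote down (they are the paper's Lemma~2) close the induction. So your strategy is the paper's strategy; what you flagged as unresolved is the one lemma that actually carries the proof, and you overlooked it because you were dividing only the ``largest-part'' recurrence $d_n=d_{n-1}+\cdots$ rather than Andrews' even-index shift relation.
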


After establishing Andrews' conjecture, this paper concludes its technical matter with a section dedicated to extending some of the ideas contributing to the proof of Theorem 1 to an appropriate level of generality. This includes recursive relations for $\mathscr{d}_{n}(x)$, which can be thought of as the corresponding relations to Andrews' recursive relations of the Alladi-Schur polynomials $d_n(x)$. In addition, explicit formulas for $\mathscr{d}_n(x)$ in terms of $x,q,$ and $c(n,j)$ are given.\footnote{Such formulas were alluded to by Andrews in \cite{And2} , but they were not explicitly stated.} Lastly, this final chapter also mentions some implications for $c(n,j)$ that show, for example, why Theorem 1 is stronger than the statement of Andrews' conjecture.

\section{Proof of Theorem 1}
We begin with the following useful lemma, which already gives one recursive relation for the polynomials $\mathscr{d}_{n}(x)$.

\begin{lemma}
For $N\geq3$, we have the following.
    \[\mathscr{d}_{2N}(x)=\mathscr{d}_{2N-3}(xq^2)\tag{2.1}\]
\end{lemma}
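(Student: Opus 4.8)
The plan is to unwind definition (1.3), reduce (2.1) to an identity purely about the Alladi--Schur polynomials $d_n(x)$, and then prove that identity by induction. First I would compute the two $p$-indices occurring in (2.1): since $\chi_o(2N)=0$ and $\chi_o(2N-3)=1$, definition (1.3) gives $\mathscr{d}_{2N}(x)=d_{2N}(x)/p_{\lceil N/3\rceil}(x)$ and $\mathscr{d}_{2N-3}(x)=d_{2N-3}(x)/p_{\lceil N/3\rceil-1}(x)$. Next I would record the elementary factor identity
\[
(1+xq+x^2q^2)\,p_{k-1}(xq^2)=p_k(x),
\]
which follows by substituting $x\mapsto xq^2$ in the product defining $p_{k-1}$ and reindexing $i\mapsto i+1$: the factor $1+(xq^2)q^{2i-1}+(xq^2)^2q^{4i-2}=1+xq^{2i+1}+x^2q^{4i+2}$ is precisely the $(i+1)$-st factor of $p_k(x)$, and the single omitted factor, the one with $i=1$, is $1+xq+x^2q^2$. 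Dividing, (2.1) becomes equivalent to the single polynomial identity
\begin{equation*}
d_{2N}(x)=(1+xq+x^2q^2)\,d_{2N-3}(xq^2)\qquad(N\ge 3).\tag{$\star$}
\end{equation*}

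To prove $(\star)$ I would induct on $N$, with base case $N=3$ (that is, $d_6(x)=(1+xq+x^2q^2)\,d_3(xq^2)$) checked directly from the definitions of $d_3$ and $d_6$. For the inductive step I would use the one-step recurrence obtained by conditioning a Schur partition with parts $\le n$ on whether its largest part equals $n$: deleting a largest part equal to $n$ leaves a Schur partition with parts $\le n-3$ when $3\nmid n$ and parts $\le n-4$ when $3\mid n$ (a smaller bound would create a gap $<3$ or a pair of consecutive multiples of $3$), whence
\[
d_n(x)=d_{n-1}(x)+x^{\,1+\chi_e(n)}\,q^{\,n}\,d_{\,n-3-\chi_{3\mid n}}(x),
\]
where $\chi_e$ and $\chi_{3\mid n}$ are the indicator functions of ``$n$ even'' and ``$3\mid n$.'' Applying this twice to $d_{2N+2}(x)-d_{2N}(x)$ and twice to $d_{2N-1}(x)-d_{2N-3}(x)$, then substituting $x\mapsto xq^2$ in the latter, reduces the inductive step to a relation among $d$'s of smaller even index, into which the inductive hypothesis is fed through its equivalent reformulation $(1+xq+x^2q^2)\,d_K(xq^2)=d_{K+3}(x)$ for odd $K$ with $3\le K\le 2N-3$.

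As motivation I would also record the combinatorial picture behind $(\star)$: the factor $1+xq+x^2q^2$ is the Alladi--Schur generating function of the three seed partitions $\varnothing$, $(1)$, $(2)$, while the substitution $x\mapsto xq^2$ in $d_{2N-3}$ amounts to adding $2$ to every odd part and $4$ to every even part of a Schur partition with parts $\le 2N-3$; this map preserves the parity of each part (hence the statistic $m$), increases the size by exactly $2m$, and lands every part in $[3,2N]$. One then hopes to build a weight-preserving bijection onto Schur partitions with parts $\le 2N$ by prepending a seed and repairing the comparatively few configurations in which the shifted partition fails to be Schur.

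The step I expect to be the main obstacle is closing the induction in $(\star)$. Because the ``no consecutive multiples of $3$'' condition makes the $d_n$-recurrence depend on $n\bmod 3$, the naive double application of the recurrence produces terms $d_{\mathrm{even}}(xq^2)$ to which the inductive hypothesis --- which concerns $d_{\mathrm{odd}}(xq^2)$ --- does not directly apply. Handling this forces a case split on $N\bmod 3$ and, in the stubborn residue classes, a strengthening of the induction that carries a companion identity for the odd-index polynomials alongside $(\star)$; this is exactly the kind of simultaneous bookkeeping over all residues modulo $6$ that underlies Andrews' factorization theorem. On the combinatorial side the same difficulty reappears as the problem of pinpointing which shifted partitions violate the gap-$3$ or no-consecutive-multiples-of-$3$ conditions, and of checking that prepending one of the three seeds repairs each of them.
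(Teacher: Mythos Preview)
Your reduction is exactly the paper's argument. You correctly compute the two $p$-indices, record $(1+xq+x^2q^2)\,p_{k-1}(xq^2)=p_k(x)$, and reduce (2.1) to the single identity
\[
d_{2N}(x)=(1+xq+x^2q^2)\,d_{2N-3}(xq^2)\qquad(N\ge 3).\tag{$\star$}
\]
This is precisely the chain of equalities the paper writes in one displayed line.

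The difference is what happens next. The paper does not prove $(\star)$; it simply quotes it as an established recursion of Andrews from \cite{And2} (there written $d_{2N}(x)=\lambda(x)\,d_{2N-3}(xq^2)$ with $\lambda(x)=1+xq+x^2q^2$). So the lemma is, in the paper, an immediate two-line consequence of a known identity. Everything in your proposal from ``To prove $(\star)$ I would induct on $N$'' onward is extra: you are volunteering to reprove a cited result. The obstacle you anticipate---the mod-$3$ case split and the need for a companion odd-index identity---is real and is essentially how Andrews himself handles these recursions, but none of that work belongs in the proof of this lemma. If your goal is a self-contained write-up, it would be cleaner to isolate $(\star)$ as a separate lemma (citing or reproving Andrews) and then let Lemma~1 be the one-line deduction you already have.
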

\begin{proof}
Recall that in \cite{And2}, Andrews has shown that, for $N\geq3$, we have $d_{2N}(x)=\lambda(x)d_{2N-3}(xq^2)$ with ${\lambda(x)=1+xq+x^2q^2}$. It follows that,
\[
    \mathscr{d}_{2N}(x)=\frac{d_{2N}(x)}{p_{\lceil\frac{2N}{6}\rceil}(x)}=\frac{\lambda(x)d_{2N-3}(xq^2)}{p_{\lceil\frac{(2N-3)+3}{6}\rceil}(x)}=\frac{d_{2N-3}(xq^2)}{p_{\lceil\frac{(2N-3)+3}{6}\rceil-1}(xq^2)}=\mathscr{d}_{2N-3}(xq^2).\]
This establishes the lemma.    
\end{proof}

To settle Andrews' conjecture, we need one more lemma. First, observe that, by the standard technique of removing the largest part (see \cite{YA} and also \cite{And1, And2}), it is easy to see that
\[
 d_N(x)=d_{N-1}(x)+x^{1+\chi_2(N)}q^{N}d_{N-3-\chi_3(N)}(x)\tag{2.2},   \]
where $\chi_n$ denotes the indicator function for the set of multiples of $n$. The next lemma gives the analogous property for only the odd-indexed $\mathscr{d}_n(x)$, which is the case that will be used in the proof of Theorem 1, with the even-indexed case being presented in the next section.

\begin{lemma}
    For $2N-1\geq 5$ with $2N-1\not\equiv0\pmod{3}$ we have
    \[
        \mathscr{d}_{2N-1}(x)=\mathscr{d}_{2N-2}(x)+xq^{2N-1}\mathscr{d}_{2N-4}(x)\tag{2.3}.
    \]
    Otherwise, for $N\geq0$, with initial\footnote{This can be arrived at from Andrews' convention that $d_{-1}(x)=1$ (see \cite{And1,And2}) and the convention that an empty product $=1$.} condition $\mathscr{d}_{-1}(x)=1$, we have
    \[
      \mathscr{d}_{6N+3}(x)=\lambda_{N+1}(x)\mathscr{d}_{6N+2}(x)+xq^{6N+3}\mathscr{d}_{6N-1}(x)\tag{2.4}, 
    \]
    where $\lambda_N(x)=(1+xq^{2N-1}+x^2q^{4N-2})$.
\end{lemma}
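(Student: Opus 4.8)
The plan is to derive both recurrences straight from the largest-part recurrence (2.2) for $d_N(x)$, dividing through by the appropriate product $p_k(x)$ and then checking that the ceiling-valued index in the definition (1.3) of $\mathscr{d}_n(x)$ behaves consistently across the three arguments that occur. The whole argument is elementary once the index bookkeeping is pinned down, so I would isolate that bookkeeping first. Concretely, for odd $n$ the index in (1.3) is $\lceil\tfrac{n+3}{6}\rceil-1$; I would compute that this equals $N$ both when $n=6N+3$ and when $n=6N-1$, whereas for the even argument $n=6N+2$ the index $\lceil\tfrac{n}{6}\rceil$ equals $N+1$, and that for odd $n\not\equiv 0\pmod 3$ the index agrees with the indices of the two even arguments $n-1$ and $n-3$ produced by removing the largest part. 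These are one-line computations, but they are the heart of the matter, so I would present them cleanly, organized by residue class modulo $6$.

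Given this, the generic case is immediate. For $2N-1\ge 5$ with $2N-1\not\equiv 0\pmod 3$ we have $\chi_2(2N-1)=\chi_3(2N-1)=0$, so (2.2) reads $d_{2N-1}(x)=d_{2N-2}(x)+xq^{2N-1}d_{2N-4}(x)$. Noting that $2N-1\not\equiv 0\pmod 3$ is equivalent to $N\equiv 0$ or $1\pmod 3$, a short check gives $\lceil\tfrac{2N+2}{6}\rceil-1=\lceil\tfrac{2N-2}{6}\rceil=\lceil\tfrac{2N-4}{6}\rceil$, so dividing the displayed identity by $p_{\lceil(2N+2)/6\rceil-1}(x)$ turns each quotient on the right into the matching $\mathscr{d}$, which is (2.3).

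For the remaining case, oddness together with divisibility by $3$ forces $2N-1\equiv 3\pmod 6$, so after relabeling I take $n=6N+3$ with $N\ge 0$; now $\chi_2=0$ and $\chi_3=1$, so (2.2) gives $d_{6N+3}(x)=d_{6N+2}(x)+xq^{6N+3}d_{6N-1}(x)$. The $p$-index of $6N+3$ and of $6N-1$ is $N$, while that of $6N+2$ is $N+1$; dividing by $p_N(x)$ and using $p_{N+1}(x)=p_N(x)\lambda_{N+1}(x)$ rewrites $d_{6N+2}(x)/p_N(x)$ as $\lambda_{N+1}(x)\mathscr{d}_{6N+2}(x)$, yielding (2.4). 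I would dispatch $N=0$ by hand: there $p_0(x)=1$, $d_{-1}(x)=1$, $\mathscr{d}_{-1}(x)=1$, $\lambda_1=p_1$, and one checks directly that (2.4) collapses to $\mathscr{d}_3(x)=\lambda_1(x)\mathscr{d}_2(x)+xq^3$, consistent with $\mathscr{d}_3=d_3$, $\mathscr{d}_2=d_2/p_1=1$, and $d_3(x)=d_2(x)+xq^3$.

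The only real obstacle is precisely this index accounting: one has to confirm that the ceiling index in (1.3) is stationary under $n\mapsto n-1,\,n-3$ in the non-multiple-of-$3$ case but jumps by exactly one at the even neighbor in the $\equiv 3\pmod 6$ case, and one has to be careful with the small and boundary values, in particular $N=0$ and the convention $\mathscr{d}_{-1}=1$. I would also remind the reader that the quotients $\mathscr{d}_n$ are genuine polynomials in $x$ and $q$ by Andrews' factorization theorem quoted in the introduction, so (2.3) and (2.4) are identities of polynomials, not merely of formal power series.
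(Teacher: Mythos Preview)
Your proposal is correct and follows essentially the same approach as the paper: both start from the largest-part recurrence (2.2), divide by the appropriate $p_k(x)$, and verify that the $p$-indices match. The only cosmetic difference is that the paper bypasses the ceiling bookkeeping by writing the non-multiple-of-$3$ case directly as the two subcases $2N-1=6M\pm1$, whereas you verify the equality $\lceil\tfrac{2N+2}{6}\rceil-1=\lceil\tfrac{2N-2}{6}\rceil=\lceil\tfrac{2N-4}{6}\rceil$ in one stroke.
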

\begin{proof}
For odd indices not divisible by $3$, either\footnote{For ease, one may use $\lceil\frac{n+3\chi_o(n)}{6}\rceil -\chi_o(n)=\lceil\frac{n-3\chi_o(n)}{6}\rceil$ throughout, but I opted to avoid any confusion from $\lceil\frac{-2}{6}\rceil= 0$.}
    \[\mathscr{d}_{6N+1}(x)=\frac{d_{6N+1}(x)}{p_N(x)}=\frac{d_{6N}(x)+xq^{6N+1}d_{6N-2}(x)}{p_N(x)}=\mathscr{d}_{6N}(x)+xq^{6N+1}\mathscr{d}_{6N-2}(x),\]
or    
    \[\mathscr{d}_{6N-1}(x)=\frac{d_{6N-1}(x)}{p_N(x)}=\frac{d_{6N-2}(x)+xq^{6N-1}d_{6N-4}(x)}{p_N(x)}=\mathscr{d}_{6N-2}(x)+xq^{6N-1}\mathscr{d}_{6N-4}(x).\]
Otherwise, if the index is divisible by three, then\smallskip
    \[\mathscr{d}_{6N+3}(x)=\frac{d_{6N+3}(x)}{p_{N}(x)}=\frac{d_{6N+2}(x)+xq^{6N+3}d_{6N-1}(x)}{p_{N}(x)}=\lambda_{N+1}(x)\mathscr{d}_{6N+2}(x)+xq^{6N+3}\mathscr{d}_{6N-1}(x).\]
\end{proof}
We are now ready to prove Theorem 1 and settle Andrews' conjecture. The proof below closely draws on the approach Andrews took in establishing his factorization theorem in \cite{And2}. However, in this manuscript, the problems and results are framed from a new perspective. This plays an essential role in the approach taken here to resolve the conjecture.

\begin{proof}[Proof of Theorem 1]
We proceed by strong induction. For the base case, observe the following.
\begingroup
\setlength{\jot}{6pt}
\begin{gather*}
    \mathscr{d}_{1}(x)=1+xq\\
    \mathscr{d}_{2}(x)=1\\
    \mathscr{d}_{3}(x)=1+x(q+q^3)+x^2q^2\\
    \mathscr{d}_{4}(x)=1+xq^3\\
    \mathscr{d}_{5}(x)=1+x(q^3+q^5)\\
    \mathscr{d}_{6}(x)=1+x(q^3+q^5)+x^2q^6
\end{gather*}
\endgroup
Suppose this is true for every $6\leq n< N'$. For the inductive step, we first handle the case of even indices. If $N'=2N\geq6$, then by Lemma 1  
\[\mathscr{d}_{2N}(x)=\mathscr{d}_{2N-3}(xq^2),\] with  $\mathscr{d}_{2N-3}(xq^2)$ a polynomial in $x$ and $q$ with nonnegative integer coefficients by the induction hypothesis. It follows that $\mathscr{d}_{2N}(x)$ is a polynomial in $x$ and $q$ with nonnegative integer coefficients.\medskip
    
For the case of odd indices, we handle two instances: one where the index is not a multiple of three and another where it is. If $N'= 2N-1\not\equiv0\pmod{3}$, then by (2.3)
\[\mathscr{d}_{2N-1}(x)=\mathscr{d}_{2N-2}(x)+xq^{2N-1}\mathscr{d}_{2N-4}(x).\]
We see that the right-hand side consists of polynomials in $x$ and $q$ with nonnegative integer coefficients. It follows that the same is true for the left-hand side.\medskip

Lastly, for odd indices that are divisible by $3$, write $N'=6N+3$, we have       
      \[\mathscr{d}_{6N+3}(x)=\lambda_{N+1}(x)\mathscr{d}_{6N+2}(x)+xq^{6N+3}\mathscr{d}_{6N-1}(x).\]
Again, the right-hand side consists of polynomials in $x$ and $q$ with nonnegative integer coefficients, and so the same must be true for the left-hand side.\medskip

Now, Andrews' conjecture immediately follows as what was shown here, along with the initial conditions for $c(n,i)$ (e.g., $c(0,0)=1$) presented in \cite{And2}, implies that, in particular, $\mathscr{d}_{6n-1}(x)=\sum_{i=0}^nc(n,i)x^i$ is a polynomial in $x$ and $q$ with nonnegative integer coefficients and $c(n, i)$ is a polynomial in $q$ with nonnegative integer coefficients for any $i,n\in\mathbb{N}$. 

\end{proof}

To better grasp the significance of the result, the reader is invited to examine Andrews' explicit formula for $c(n,j)$ presented in \cite[Thm 1.3]{And2}. From that perspective, the number of years the conjecture remained open can be better appreciated. 
\section{Further relations and remarks related to the quotients $\mathscr{d}_{n}(x)$}

We begin by completing the analogy of equation (2.2) for the even cases.
\begin{lemma} For $N\geq 1$, if $N\not\equiv 0\pmod{3}$ then
\[
   \lambda_{\lceil\frac{2N}{6}\rceil}(x)\mathscr{d}_{2N}(x)=\mathscr{d}_{2N-1}(x)+x^2q^{2N}\mathscr{d}_{2N-3}(x).\tag{3.1}\]
Otherwise,
   \[\mathscr{d}_{6N}(x)=\mathscr{d}_{6N-1}(x)+x^2q^{6N}\mathscr{d}_{6N-4}(x).\tag{3.2}\]
\end{lemma}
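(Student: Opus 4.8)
The plan is to mimic the proof of Lemma 2, but starting from the \emph{even}-index instance of the recursion (2.2) and carefully tracking how the factor $p_k(x)$ in the denominator of $\mathscr{d}_n(x)$ changes. The one algebraic fact I will use repeatedly is $p_k(x) = p_{k-1}(x)\,\lambda_k(x)$, which is immediate from the definitions $p_n(x) = \prod_{i=1}^n(1 + xq^{2i-1} + x^2q^{4i-2})$ and $\lambda_k(x) = 1 + xq^{2k-1} + x^2q^{4k-2}$.

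First I would specialize (2.2) to the index $2N$. Since $\chi_2(2N) = 1$ always and $\chi_3(2N) = 1$ exactly when $3 \mid N$, this reads
\[
d_{2N}(x) = d_{2N-1}(x) + x^2 q^{2N} d_{2N-3}(x)\quad(3\nmid N),\qquad d_{6N}(x) = d_{6N-1}(x) + x^2 q^{6N} d_{6N-4}(x).
\]
For the divisible case (the second display, with $N$ as in the statement), I would just compute the relevant subscripts from (1.3): $\mathscr{d}_{6N}$ has denominator $p_{\lceil 6N/6\rceil}(x) = p_N(x)$, while $\mathscr{d}_{6N-1}$ has denominator $p_{\lceil(6N+2)/6\rceil-1}(x) = p_N(x)$ and $\mathscr{d}_{6N-4}$ has denominator $p_{\lceil(6N-4)/6\rceil}(x) = p_N(x)$. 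All three denominators coincide, so dividing the recursion through by $p_N(x)$ yields (3.2) immediately.

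For the case $3\nmid N$, $\mathscr{d}_{2N}$ carries the denominator $p_{\lceil 2N/6\rceil}(x)$, whereas $\mathscr{d}_{2N-3}$ carries $p_{\lceil 2N/6\rceil - 1}(x)$. I would multiply the recursion through by $\lambda_{\lceil 2N/6\rceil}(x)/p_{\lceil 2N/6\rceil}(x)$ and use $p_{\lceil 2N/6\rceil}(x) = p_{\lceil 2N/6\rceil - 1}(x)\,\lambda_{\lceil 2N/6\rceil}(x)$: the left side becomes $\lambda_{\lceil 2N/6\rceil}(x)\,\mathscr{d}_{2N}(x)$, and the last term becomes $x^2 q^{2N}\,\mathscr{d}_{2N-3}(x)$. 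The middle term becomes $d_{2N-1}(x)/p_{\lceil 2N/6\rceil - 1}(x)$, which I must identify with $\mathscr{d}_{2N-1}(x)$; since the denominator of $\mathscr{d}_{2N-1}$ is $p_{\lceil(2N+2)/6\rceil - 1}(x)$, this requires the ceiling identity $\lceil 2N/6\rceil = \lceil(2N+2)/6\rceil$, which (for integer $N$, so $2N\in\{0,2,4\}\bmod 6$) holds precisely when $2N\not\equiv 0\pmod 6$, i.e. when $3\nmid N$ --- exactly the hypothesis of this case.

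The only real obstacle is this bookkeeping with the ceiling functions: once the subscript identities above are checked (a short case analysis modulo $3$), the lemma follows purely formally from (2.2) and $p_k = p_{k-1}\lambda_k$. I would present that case check explicitly, since the mismatch it detects --- present when $3\nmid N$ and absent when $3\mid N$ --- is exactly what forces the extra factor $\lambda_{\lceil 2N/6\rceil}(x)$ on the left of (3.1) but not in (3.2).
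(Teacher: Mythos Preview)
Your proposal is correct and follows essentially the same route as the paper: both arguments specialize (2.2) to the even index $2N$, divide through by the appropriate $p_k(x)$, and use $p_k=p_{k-1}\lambda_k$; the paper simply writes out the two subcases $2N=6M-2$ and $2N=6M-4$ explicitly, whereas you package them into the single ceiling identity $\lceil 2N/6\rceil=\lceil(2N+2)/6\rceil$ for $3\nmid N$.
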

\begin{proof}
For the case where the index is not divisible by three, either 
$$\lambda_{N}(x)\mathscr{d}_{6N-2}(x)=\frac{\lambda_{N}(x)d_{6N-2}(x)}{p_N(x)}=\frac{d_{6N-2}(x)}{p_{N-1}(x)}$$
$$=\frac{d_{6N-3}(x)+x^2q^{6N-2}d_{6N-5}(x)}{p_{N-1}(x)}=\mathscr{d}_{6N-3}(x)+x^2q^{6N-2}\mathscr{d}_{6N-5}(x),$$
or
$$\lambda_{N}(x)\mathscr{d}_{6N-4}(x)=\frac{\lambda_{N}(x)d_{6N-4}(x)}{p_N(x)}=\frac{d_{6N-4}(x)}{p_{N-1}(x)}
$$
$$=\frac{d_{6N-5}(x)+x^2q^{6N-4}d_{6N-7}(x)}{p_{N-1}(x)}
=\mathscr{d}_{6N-5}(x)+x^2q^{6N-4}\mathscr{d}_{6N-7}(x).$$
This proves equation (3.1).
Otherwise,
\[\mathscr{d}_{6N}(x)=\frac{d_{6N}(x)}{p_{N}(x)}=\frac{d_{6N-1}(x)+x^2q^{6N}d_{6N-4}(x)}{p_{N}(x)}=\mathscr{d}_{6N-1}(x)+x^2q^{6N}\mathscr{d}_{6N-4}(x),\]
which proves equation (3.2).
\end{proof}

\begin{remark}
Allow me to point out an amazing fact. Observe that the recursions for $6N$, $6N-1$ and $6N+1$, namely equations  (3.2) and (2.3), have the same form. On the other hand, for index $6N\pm 2$ the recursion is (3.1). Lastly, the recursion for indices that are $6N+3$ has a unique form. Now, in my bijective proof in \cite{YA}, I have introduced the set\footnote{It is to be noted that $\mathcal{D}^*$ is also the set of Schur partitions where the smallest power of $2$ that could occur as a part is $2^3=8$. } $\mathcal{D}^*$ of Schur partitions with odd parts $\geq 3$ and even parts $\geq 6$ and defined the \textit{refined upper factorization} of $\pi$ as a partition of the parts of $\pi\in\mathcal{D}^*$ into intervals called refined factors. Let the $\pi\in\mathcal{D}^*$ have largest part $\lambda$. If $\lambda=6N+3$, then it \textbf{must} be a free odd upper singleton. Moreover, if the index is an odd non-multiple of three, it can only be a member of a pair or a free odd singleton; likewise, when $\lambda=6N>0$, although it cannot be part of a pair, it can be one of two things: a free upper singleton or the largest upper minimal segment singleton. Lastly, if $\lambda$ is an even non-multiple of three, then it could be either a free singleton, a member of an upper pair or the largest part of an upper minimal segment. Observe that we again get the same grouping. From here, define ${\#:\mathbb{N}^+\setminus\{4,2,1\}\to\{0,1,2,3\}}$ by letting $\#(N)$ denote the cardinality of the list of possible kinds of factors that can have $N$ as their largest part. Let $\text{sign}:\mathbb{Q}[x,q]\to\{-1,0,1\}$ be defined by $\text{sign}(0)=0$ and for $\sum\mu_{i,j}x^iq^j\in\mathbb{Q}[x,q]\setminus\{0\}$ by $\frac{\min_{\mu_{i,j}\not=0}\{\mu_{i,j}\}}{|\min_{\mu_{i,j}\not=0}\{\mu_{i,j}\}|}$ (alternatively, $\text{sign}$ assigns $1$ to a non-zero polynomial $f$ with nonnegative coefficients and assigns $-1$ to a non-zero polynomial $g$ if any of its coefficients are negative).
In addition, let ${r_N=r_N(x,q)=\mathscr{d}_N(x)-\mathscr{d}_{N-1}(x)-x^{1+\chi_2(N)}q^{N}\mathscr{d}_{N-3-\chi_3(N)}(x)}$.\\ Then, for $m,n>2$ \[\#(m)\leq\#(n) \Leftrightarrow \text{sign}(r_n)\leq\text{sign}(r_m).\]
Moreover, for\footnote{The relations can be extended in view of $\#(n)=\#(n+6k)$ and $ \text{sign}(r_n)=\text{sign}(r_{n+6k})$. However, I did not find such an extension compelling.} $n>2$ we have the ``conserved quantity" given by \(\#(n)+\text{sign}(r_n)=2.\)
\end{remark}

\begin{theorem}
 For $N\geq3$, we have the following.
\[
    \mathscr{d}_{2N-1}(x)=\lambda_{\lceil \frac{2N+2}{6}\rceil}(x)\mathscr{d}_{2N-4}(xq^2)+xq^{2N-1}(1-xq)\mathscr{d}_{2N-7}(xq^2)\tag{3.3}
\]

\end{theorem}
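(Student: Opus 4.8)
The plan is to derive (3.3) by combining the two recursive relations already established in the excerpt: Lemma 1, which reduces an even-indexed quotient to an odd-indexed one with $x\mapsto xq^2$, and Lemma 2, which expresses an odd-indexed quotient in terms of lower-indexed ones. Concretely, I would start from the odd-index relation (2.3) — applicable here since we are looking at $\mathscr{d}_{2N-1}(x)$ and want to peel off the largest part — writing $\mathscr{d}_{2N-1}(x)=\mathscr{d}_{2N-2}(x)+xq^{2N-1}\mathscr{d}_{2N-4}(x)$, at least in the case $2N-1\not\equiv 0\pmod 3$; the case $2N-1\equiv 0\pmod 3$ (i.e. $2N-1=6M+3$) uses (2.4) instead and will need to be checked to produce the same closed form. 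Then I would use Lemma 1 to rewrite the even-indexed term $\mathscr{d}_{2N-2}(x)=\mathscr{d}_{2(N-1)}(x)=\mathscr{d}_{2N-5}(xq^2)$, so that after one step everything is expressed through $\mathscr{d}_{2N-5}(xq^2)$ and $\mathscr{d}_{2N-4}(x)$.

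The next move is to iterate: apply (2.3) again to $\mathscr{d}_{2N-5}(xq^2)$ (this is legitimate because $x$ is a free formal variable, so (2.3) holds verbatim with $x$ replaced by $xq^2$ and the exponents shifted), giving $\mathscr{d}_{2N-5}(xq^2)=\mathscr{d}_{2N-6}(xq^2)+(xq^2)q^{2N-5}\mathscr{d}_{2N-8}(xq^2)$, and similarly unfold $\mathscr{d}_{2N-4}(x)$ using Lemma 3, since $2N-4$ is even: $\lambda_{\lceil (2N-4)/6\rceil}(x)\,\mathscr{d}_{2N-4}(x)=\mathscr{d}_{2N-5}(x)+x^2q^{2N-4}\mathscr{d}_{2N-7}(x)$, or alternatively again push $\mathscr{d}_{2N-4}(x)$ through Lemma 1 as $\mathscr{d}_{2N-7}(xq^2)$ when $2N-4\equiv 0\pmod 6$. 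The target right-hand side $\lambda_{\lceil (2N+2)/6\rceil}(x)\mathscr{d}_{2N-4}(xq^2)+xq^{2N-1}(1-xq)\mathscr{d}_{2N-7}(xq^2)$ already tells me which primitive pieces should survive: $\mathscr{d}_{2N-4}(xq^2)$ (again by Lemma 1 this equals $\mathscr{d}_{2N-1}(x)/\lambda_{\lceil(2N+2)/6\rceil}$... so in fact the whole identity is, after multiplying through, a statement purely about the $d_n$'s and the $p_n$'s). That observation suggests a cleaner route: clear all denominators. Multiply (3.3) by the appropriate product $p_{\lceil\cdot\rceil}(x)$, convert every $\mathscr{d}_m$ back to $d_m/p_{(\cdot)}$, use the known $d$-recurrences (2.2) and Andrews' $d_{2N}(x)=\lambda(x)d_{2N-3}(xq^2)$, and verify the resulting polynomial identity in $d$'s directly.

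I would therefore organize the proof as: (i) note $\mathscr{d}_{2N-4}(xq^2)=\mathscr{d}_{2N-1}(x)\big/\lambda_{\lceil(2N+2)/6\rceil}(x)$ is \emph{not} quite what we want — rather, compute $\mathscr{d}_{2N-1}(x)$ from scratch via (2.2)/(2.3) as above; (ii) reduce $\mathscr{d}_{2N-2}(x)$ and $\mathscr{d}_{2N-4}(x)$ to arguments $xq^2$ using Lemmas 1 and 3; (iii) collect terms and identify the factor $\lambda_{\lceil(2N+2)/6\rceil}(x)$ and the factor $(1-xq)$; (iv) handle the residue classes of $2N-1$ mod $6$ (there are effectively three relevant cases, $2N-1\equiv 1,3,5\pmod 6$, since $2N-1$ is odd) so that the ceiling indices on the $\lambda$'s and $p$'s line up correctly, reconciling the $\lceil\frac{2N}{6}\rceil$-type indices appearing in Lemmas 1 and 3 with the $\lceil\frac{2N+2}{6}\rceil$ appearing in (3.3). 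The main obstacle I expect is precisely step (iv): the bookkeeping of which $\lambda_k$ appears — getting $\lambda_{\lceil(2N+2)/6\rceil}$ rather than $\lambda$ at an adjacent index — and checking that the $(1-xq)$ coefficient genuinely emerges (it presumably comes from a cancellation like $\lambda_{k+1}(xq^2)$ versus $\lambda(x)$, or from $\lambda(x)\mathscr{d}_{2N-3}(x)=\mathscr{d}_{2N}(x)$ interacting with $\lambda(x)=1+xq+xq^2=(1-xq)^{-1}$-ish telescoping — I would watch for the factorization $1+xq+x^2q^2$ versus $1+xq+xq^2$ discrepancy carefully, since the paper uses both $\lambda(x)$ and $\lambda_N(x)$ with different meanings). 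Once the index arithmetic is pinned down for each residue class, the identity should reduce to a short algebraic verification.
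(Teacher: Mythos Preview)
Your recursive plan is precisely the paper's first approach: apply (2.3), then Lemma~1 to both even-indexed terms to reach $\mathscr{d}_{2N-1}(x)=\mathscr{d}_{2N-5}(xq^2)+xq^{2N-1}\mathscr{d}_{2N-7}(xq^2)$, then (3.1) at $xq^2$ to rewrite $\mathscr{d}_{2N-5}(xq^2)$ in terms of $\mathscr{d}_{2N-4}(xq^2)$ and $\mathscr{d}_{2N-7}(xq^2)$. The bookkeeping you worry about in step~(iv) dissolves via the single identity $\lambda_k(xq^2)=\lambda_{k+1}(x)$, which converts $\lambda_{\lceil(2N-4)/6\rceil}(xq^2)$ into $\lambda_{\lceil(2N+2)/6\rceil}(x)$ in one stroke; the factor $(1-xq)$ then appears from $xq^{2N-1}-x^2q^{2N}$, with no delicate telescoping and no genuine $\lambda$-versus-$\lambda_N$ clash.

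The paper also records a one-line alternative that your ``cleaner route'' almost reaches but not quite: Andrews already proved in \cite{And2} that $d_{2N-1}(x)=\lambda(x)\bigl(d_{2N-4}(xq^2)+xq^{2N-1}(1-xq)d_{2N-7}(xq^2)\bigr)$, and dividing both sides by $p_{\lceil(2N-4)/6\rceil}(x)$ gives (3.3) immediately and uniformly in $N$, with no case split. You cite Andrews' even-index relation $d_{2N}(x)=\lambda(x)d_{2N-3}(xq^2)$, but it is this odd-index relation that makes the proof a single division; invoking it would spare you the residue-class analysis entirely.
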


\begin{proof}

It is possible to do this in a manner very similar to what was done in \cite{YA} for the odd-indexed Alladi-Schur polynomials. For example, if the subscript is $2N-1\not\equiv 0 \pmod{3}$ then
\begin{align*}
\mathscr{d}_{2N-1}(x)&=\mathscr{d}_{2N-2}(x)+xq^{2N-1}\mathscr{d}_{2N-4}(x)\\
&=\mathscr{d}_{2N-5}(xq^2)
+xq^{2N-1}\mathscr{d}_{2N-7}(xq^2)\\
&=\lambda_{\lceil \frac{2N-4}{6}\rceil}(xq^2)\mathscr{d}_{2N-4}(xq^2)+xq^{2N-1}\mathscr{d}_{2N-7}(xq^2)
-x^2q^{2N}\mathscr{d}_{2N-7}(xq^2)\\
&=\lambda_{\lceil \frac{2N+2}{6}\rceil}(x)\mathscr{d}_{2N-4}(xq^2)+xq^{2N-1}(1-xq)\mathscr{d}_{2N-7}(xq^2).
\end{align*}

But it is, in fact, much simpler to start with Andrews' recursive relations for the odd subscript Alladi-Schur polynomials. Namely, for $N\geq3$ Andrews' relation \cite{And2} states 
$ d_{2N-1}(x)=\lambda(x)(d_{2N-4}(xq^2)
+xq^{2N-1}(1-xq)d_{2N-7}(xq^2)).$ Dividing both sides by $p_{\lceil\frac{2N-4}{6}\rceil}(x)$ gives the theorem.
\end{proof}
In what follows, we directly consider, for each $n\geq1$, the coefficients of 
$\mathscr{d}_n(x)$ when written as a polynomial in $x$. Specifically, define $\mathscr{c}(n,i)$ in accordance to
\[
    \mathscr{d}_n(x)=\sum_{i\geq0}\mathscr{c}(n,i)x^i\tag{3.4}.
\]
Notice that the coefficients inherit analogous recurrences to those of $\mathscr{d}_n$. In particular, the relation (2.1) translates to $\mathscr{c}(2N,i)=\mathscr{c}(2N-3,i)q^{2i}$.\medskip

The upcoming theorem gives formulas of $\mathscr{c}(n,i)$ with $n\geq1$ in terms of $c(n',i')$. In view of Andrews' explicit formula for $c(n',i')$, found in \cite{And2}, these amount to explicit formulas $\mathscr{c}(n,i)$ as polynomials in $q$.

\begin{theorem}
For $N\geq1$, we have the following.

\begin{align*}
\mathscr{c}(6N\ ,j)\quad\ \ &  =c(N,j)+q^{6N}c(N-1,j-2)q^{2(j-2)}\tag{3.5}\\
\mathscr{c}(6N-1,j)&=c(N,j)\tag{3.6}\\
\mathscr{c}(6N-2,j)&=c(N,j)-q^{6N-1}c(N-1,j-1)q^{2(j-1)}\tag{3.7}\\
\mathscr{c}(6N-3,j)&=q^{-2j}c(N,j)+q^{6N-4}c(N-1,j-2)\tag{3.8}\\
\mathscr{c}(6N-4,j)&=c(N-1,j)q^{2j}\tag{3.9}\\
\mathscr{c}(6N-5,j)&=q^{-2j}c(N,j)-q^{6N-3}c(N-1,j-1)\tag{3.10}\\
\end{align*}
\end{theorem}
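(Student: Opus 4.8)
The plan is to derive all six formulas from the recurrences already established, treating the six residue classes modulo $6$ as a coupled system and bootstrapping off the single ``anchor'' identity (3.6), which is just the definition $\mathscr{d}_{6N-1}(x)=\sum_i c(N,i)x^i$ rewritten coefficient-by-coefficient. So the first step is to record (3.6) as immediate from (1.2) and (3.4). Everything else should follow by reading off coefficients of $x^j$ in the recurrences of Lemmas 1, 2, and 3, being careful that multiplying a polynomial by $q^{a}$ or substituting $x\mapsto xq^2$ shifts the coefficient $\mathscr{c}(n,j)$ to $\mathscr{c}(n,j)q^{2j}$, while multiplying by $xq^{a}$ sends $\mathscr{c}(n,j)\mapsto q^a\mathscr{c}(n,j-1)$.

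Concretely, I would proceed in this order. First, (3.9): from Lemma 1, $\mathscr{d}_{6N-4}(x)=\mathscr{d}_{2(3N-2)}(x)=\mathscr{d}_{6N-7}(xq^2)$ for $N\geq 2$ — wait, I need $2M=6N-4$, i.e. $M=3N-2$, and Lemma~1 applies for $M\geq 3$; iterating $\mathscr{d}_{6N-4}(x)=\mathscr{d}_{6N-7}(xq^2)=\cdots=\mathscr{d}_5(xq^{2(N-1)})$ or more cleanly comparing directly with $\mathscr{d}_{6(N-1)-1}$ after one application, one gets $\mathscr{c}(6N-4,j)=c(N-1,j)q^{2j}$, which is (3.9). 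Next, (3.7): apply Lemma~2 equation (2.3) with $2N'-1=6N-1$ (so $2N'-2=6N-2$), giving $\mathscr{d}_{6N-1}(x)=\mathscr{d}_{6N-2}(x)+xq^{6N-1}\mathscr{d}_{6N-4}(x)$; reading off $x^j$ and solving for $\mathscr{c}(6N-2,j)$ yields $\mathscr{c}(6N-2,j)=c(N,j)-q^{6N-1}\mathscr{c}(6N-4,j-1)=c(N,j)-q^{6N-1}c(N-1,j-1)q^{2(j-1)}$ by (3.9), which is (3.7). Then (3.8) and (3.10) come from Lemma~2 equation (2.4) and Lemma~3 equation (3.1) respectively, relating $\mathscr{d}_{6N-3}$ and $\mathscr{d}_{6N-5}$ to the already-known quantities via the factor $\lambda_{\cdot}(x)$: for instance (3.1) with $2N=6N-4$, i.e. index $6N-5$, reads $\lambda_{N-1}(x)\mathscr{d}_{6N-4}(x)=\mathscr{d}_{6N-5}(x)+x^2q^{6N-4}\mathscr{d}_{6N-7}(x)$; using (3.9) for the two outer terms and solving for $\mathscr{c}(6N-5,j)$ produces an expression that, after the $\lambda$-multiplication is expanded and telescoped against $c(N,j)$ via the Andrews relation $d_{6N-5}\cdot(\text{something})$, collapses to $q^{-2j}c(N,j)-q^{6N-3}c(N-1,j-1)$. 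Similarly (3.8) follows by feeding (2.4) (index $6N+3$, i.e. replacing $N$ by $N-1$ to land on $6N-3$) into the coefficient extraction. Finally (3.5) comes from equation (2.2)/Lemma~1: $\mathscr{d}_{6N}(x)=\mathscr{d}_{6N-3}(xq^2)$ by (2.1), and then using (3.8) with the substitution $x\mapsto xq^2$ gives $\mathscr{c}(6N,j)=\mathscr{c}(6N-3,j)q^{2j}=c(N,j)+q^{6N}c(N-1,j-2)q^{2(j-2)}$, which is (3.5). As a sanity check I would verify the alternative route $\mathscr{d}_{6N}(x)=\mathscr{d}_{6N-1}(x)+x^2q^{6N}\mathscr{d}_{6N-4}(x)$ from (3.2) gives the same thing via (3.6) and (3.9).

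The main obstacle I anticipate is bookkeeping in the two cases involving the factor $\lambda_{\lceil\cdot\rceil}(x)=1+xq^{2k-1}+x^2q^{4k-2}$ — namely (3.8) and (3.10) — because there dividing the Andrews $d$-recurrence by $p_{\cdot}(x)$ produces a $\mathscr{d}$ whose defining quotient uses a $p$ with index differing by one, so a single factor of $\lambda$ floats around and the coefficient identity is not a clean ``add two shifted copies'' but requires cancelling a $\lambda$ against itself. Concretely, one must check that the index of the $\lambda$ produced by changing $p_N\to p_{N-1}$ matches the index needed to turn $\mathscr{d}_{6N-1}$ (governed by $p_N$) into something governed by $p_{N-1}$; the ceiling-function indices in (1.3) must be tracked exactly as Andrews' footnoted convention dictates, and the potential off-by-one at $\lceil -2/6\rceil$ flagged in the paper's footnote is exactly the kind of edge case that could bite here. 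I would handle this by writing each $\mathscr{d}_{6N-i}$ explicitly as $d_{6N-i}/p_{m_i}$ with $m_i$ computed from (1.3), carrying out the division of the Andrews recurrence symbolically, and only then extracting coefficients — rather than trying to manipulate the $\mathscr{d}$'s directly. The remaining verification that the six formulas are mutually consistent (e.g. that (3.5) obtained two ways agree, and that plugging into (2.1) as noted after (3.4) is respected) is then routine and serves as a useful correctness check on the index arithmetic.
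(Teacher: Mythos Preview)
Your derivations of (3.6), (3.9), (3.7), and (3.5) are correct and essentially match the paper. The difference lies in how you handle (3.8) and (3.10).

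You propose to obtain (3.8) from (2.4) and (3.10) from (3.1), both of which carry the factor $\lambda_{\cdot}(x)$. As you correctly anticipate, after extracting the coefficient of $x^j$ and substituting (3.9), you are left with a three-term expression in $c(N-1,\cdot)$ that does \emph{not} visibly equal $q^{-2j}c(N,j)\pm\cdots$. To make it collapse you need an external identity relating $c(N,j)$ to $c(N-1,j),c(N-1,j-1),c(N-1,j-2)$ --- in fact exactly Andrews' recurrence (3.15). Your phrase ``telescoped against $c(N,j)$ via the Andrews relation $d_{6N-5}\cdot(\text{something})$'' is the right instinct but is not the identity (2.2) for $d$; it has to be (3.15), which you have not named. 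So the route works, but the missing ingredient is nontrivial and not made explicit.

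The paper sidesteps this entirely. It organizes the six cases by residue modulo $3$ and uses (2.1) in the \emph{opposite} direction from you: first derive the even-index formulas (3.5) and (3.7) from the $\lambda$-free recurrences (3.2) and (2.3) together with (3.6) and (3.9), then read off (3.8) and (3.10) from $\mathscr{d}_{6N}(x)=\mathscr{d}_{6N-3}(xq^2)$ and $\mathscr{d}_{6N-2}(x)=\mathscr{d}_{6N-5}(xq^2)$ by dividing the coefficient by $q^{2j}$. No $\lambda$ ever appears, and (3.15) is not needed. Your own ``sanity check'' for (3.5) via (3.2), (3.6), (3.9) is precisely the paper's primary derivation; promoting it to the main argument and then pulling (3.8), (3.10) back through (2.1) dissolves the obstacle you flagged.
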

\begin{proof}
In view of equation (2.1), we can separate into cases modulo $3$. For index congruent to $2\pmod{3}$ this follows from 
\[\mathscr{d}_{6N-4}(x)=\mathscr{d}_{6N-7}(xq^2)=\sum_{j=0}^{N-1}c(N-1,j)q^{2j}x^j.\]
On the other hand, when the index is congruent to $0\pmod{3}$, the result follows from
\[\mathscr{d}_{6N}(x)=\mathscr{d}_{6N-1}(x)+x^2q^{6N}\mathscr{d}_{6N-7}(xq^2)=\sum_{j=0}^Nc(N,j)x^i+x^2q^{6N}\sum_{i=0}^{N-1}c(N-1,j)q^{2j}x^j\]
\[=\sum_{j=0}^{N+1}(c(N,j)+q^{6N}c(N-1,j-2)q^{2(j-2)})x^j=\mathscr{d}_{6N-3}(xq^2).\]
Lastly, for index congruent to $1\pmod{3}$ we have
\[\mathscr{d}_{6N-2}(x)=\mathscr{d}_{6N-1}(x)-xq^{6N-1}\mathscr{d}_{6N-7}(xq^2)=\sum_{j=0}^Nc(N,j)x^i-xq^{6N-1}\sum_{i=0}^{N-1}c(N-1,j)q^{2j}x^j\]
\[=\sum_{j=0}^N(c(N,j)-q^{6N-1}c(N-1,j-1)q^{2(j-1)})x^j=\mathscr{d}_{6N-5}(xq^2).\]
This completes the proof.
\end{proof}
The corollary below illustrates how Theorem 1 can give more information about $c(n,i)$ than the statements of Andrews' conjecture. 
\begin{corollary}

     For $0<j\leq n$, we have \[
        c(n,j)\geq q^{6n-1}c(n-1,j-1)q^{2(j-1)}\tag{3.11},
    \]
    \[
        c(n,j)\geq \frac{q^{6jn+5j-2j^2-4n-2}(1-q^{4(n-j+1)})}{1-q^2}\tag{3.12},\]
    and
    \[q^{(2n+1)j}\mid c(n,j)\tag{3.13}, \]    
    where for $\alpha,\beta\in\mathbb{R}(q)$ we write $\alpha=\sum a_iq^i\geq\sum b_iq^i=\beta$ if and only if 
 $\,\forall i (a_i\geq b_i)$.
\end{corollary}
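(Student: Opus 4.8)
The plan is to derive all three assertions from Theorem 1 and the explicit relations (3.5)--(3.10), the one extra ingredient being a recursion expressing $c(n,j)$ directly through the $c(n-1,\cdot)$. The inequality (3.11) is immediate: equation (3.7) says $\mathscr{c}(6n-2,j)=c(n,j)-q^{6n-1}q^{2(j-1)}c(n-1,j-1)$, and by Theorem 1 the coefficient $\mathscr{c}(6n-2,j)$ of $x^j$ in $\mathscr{d}_{6n-2}(x)$ is a polynomial in $q$ with nonnegative coefficients, so $\mathscr{c}(6n-2,j)\geq 0$; rearranging gives (3.11).

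For the recursion I would specialize Theorem 2 (equation (3.3)) to the subscript $2N-1=6n-1$ (so $N=3n\geq 3$), then use Lemma 1 in the form $\mathscr{d}_{6n-4}(x)=\mathscr{d}_{6(n-1)-1}(xq^2)$ — valid for $n\geq 2$ by (2.1), and for $n=1$ since $\mathscr{d}_2=1=\mathscr{d}_{-1}$ — together with the identity $\mathscr{d}_{6n-7}=\mathscr{d}_{6(n-1)-1}$. Expanding $\lambda_{n+1}(x)$ and the factor $(1-xq)$ and collecting the coefficient of $x^j$ should produce, for $n\geq 1$ and $0\leq j\leq n$,
\[
c(n,j)=q^{4j}c(n-1,j)+\bigl(q^{4j+2n-3}+q^{2j+6n-3}\bigr)c(n-1,j-1)+\bigl(q^{4j+4n-6}-q^{2j+6n-4}\bigr)c(n-1,j-2).
\]

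With this recursion in hand, (3.13) follows by strong induction on $n$ (the cases $n=0,1$ are read off the list $\mathscr{d}_1,\dots,\mathscr{d}_6$). Assuming $q^{(2n-1)i}\mid c(n-1,i)$ for all $i$, the three summands above have $q$-valuation at least $(2n+3)j$, $(2n+3)j-2$ and $(2n+3)j-4$ respectively — the bound $j\leq n$ makes $q^{4j+2n-3}$ and $q^{4j+4n-6}$ the lowest monomials in the two binomials, and summands with an out-of-range index of $c$ vanish — and each of these is $\geq(2n+1)j$ because $j\geq 1$ (for the first two) and $j\geq 2$ (for the third, which is vacuous when $j\leq 1$). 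Hence $q^{(2n+1)j}\mid c(n,j)$.

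Finally, for (3.12) I would induct on $j$ over all $n\geq j\geq 1$, writing the claimed bound as $R(n,j)=q^{6jn+5j-2j^2-4n-2}\bigl(1+q^2+\cdots+q^{4(n-j)+2}\bigr)\geq 0$. The base case $j=1$: the recursion collapses to $c(n,1)=q^4c(n-1,1)+q^{2n+1}+q^{6n-1}$ with $c(0,1)=0$, which solves to $c(n,1)=q^{2n+1}(1-q^{4n})/(1-q^2)=R(n,1)$, i.e.\ equality. For $j\geq 2$, (3.11) and the inductive hypothesis give $c(n,j)\geq q^{6n+2j-3}c(n-1,j-1)\geq q^{6n+2j-3}R(n-1,j-1)$, and a short exponent check — the geometric factors coincide since $(n-1)-(j-1)+1=n-j+1$ — shows the last expression equals $R(n,j)$. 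I expect the only real labour to be a careful derivation of that recursion (the index bookkeeping in (3.3), (2.1), the $\lambda$-subscripts, and the boundary cases $n=1$, $j\in\{0,1,2\}$, $j=n$ where $c(n-1,n)=0$); a point worth noting is that its negative monomial $-q^{2j+6n-4}c(n-1,j-2)$ is harmless for the valuation count in (3.13) but blocks reading a term-by-term positivity refinement straight off the recursion, which is why (3.12) is obtained by iterating (3.11) instead.
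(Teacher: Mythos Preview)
Your proposal is correct and follows essentially the same route as the paper: (3.11) from Theorem~1 and (3.7); the recursion (your displayed identity is exactly the paper's (3.15), obtained the same way via (3.3) and (2.1)); (3.13) by induction on $n$ via that recursion; and (3.12) by iterating (3.11) down to $c(\cdot,1)$. The only cosmetic difference is that you recover $c(n,1)=q^{2n+1}(1-q^{4n})/(1-q^2)$ by solving the $j=1$ case of the recursion, whereas the paper cites Andrews (and offers a combinatorial alternative) for this formula.
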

\begin{proof}
 In view of Theorem 1 and (3.7), we directly get \[
        c(n,j)\geq q^{6n-1}c(n-1,j-1)q^{2(j-1)}
    .\]
    To obtain (3.12), we will employ one more relation. Now, in \cite{And2}, Andrews deduced \[c(n,1)=\frac{q^{2n+1}(1-q^{4n})}{1-q^2}\tag{3.14}.\]
    We note that we can actually arrive at (3.14) combinatorially. The coefficient of $x$ in $d_{6n-1}$ is the generating function for partitions into one odd part $\leq6n-1$. In view of (1.2) and (1.3), this is also equal to the coefficient of $x$ of $p_n$ plus the coefficient of $x$ in $\mathscr{d}_{6n-1}$. Thus,
    \[\frac{q-q^{6n+1}}{1-q^2}=\frac{q-q^{2n+1}}{1-q^2}+c(n,1).\]
    This implies (3.14). Now, by repeatedly applying (3.11), we get
    \[c(n,j)\geq q^{6n-1}c(n-1,j-1)q^{2(j-1)}\geq q^{6n-1+6(n-1)-1}c(n-2,j-2)q^{2(j-1)+2(j-2)}\geq\dots\]\[\geq q^{6(T_{j-1}+(j-1)(n-j+1))-(j-1)}c(n-j+1,1)q^{2T_{j-1}}=q^{6jn +7j -2j^2 -6n -5}c(n-j+1,1).\]
    Invoking (3.14) gives
    \[c(n,j)\geq q^{6jn +7j -2j^2 -6n -5}c(n-j+1,1)=\frac{q^{6jn+5j-2j^2-4n-2}(1-q^{4(n-j+1)})}{1-q^2}.\]
        
    This completes the proof of (3.12).

    To establish\footnote{Observe that from Theorem 1 and (3.8), it follows that for $0<j\leq n$ we have $q^{2j}\mid c(n,j)$ which, although easy to prove, is weaker than (3.13).} (3.13) and complete the corollary, we first recall the following identity of Andrews (see \cite[Lem. 4.1]{And2}). 
    \[c(n,j)=q^{4j}(c(n-1,j)+(q^{2n-3} +q^{6n-2j-3})c(n-1,j-1) + (q^{4n-6} - q^{6n-2j-4})c(n - 1, j - 2))\tag{3.15}\]
    We remark that (3.15) can be deduced\footnote{In \cite{And2} Andrews establishes (3.15) by a very similar argument.} by first noting that from (3.3) and (2.1), we get, as a special case
    \[\mathscr{d}_{6n-1}(x)=\lambda_{n+1}(x)\mathscr{d}_{6n-7}(xq^4)+xq^{6n-1}(1-xq)\mathscr{d}_{6n-7}(xq^2)\tag{3.16},\]
    after which, rewriting both sides of (3.16) using (1.2) and collecting the coefficient of $x^j$ gives (3.15).
    Now, suppose that (3.13) is true for all $j, n<N$. In such cases (i.e. $j, n<N$), define ${c'(n,j)= \frac{c(n,j)}{q^{(2n+1)j}}}$. Then,\begin{align*}
        c(N,j)&=q^{4j}(c(N-1,j)+(q^{2N-3} +q^{6N-2j-3})c(N-1,j-1) + (q^{4N-6} - q^{6N-2j-4})c(N - 1, j - 2))\\
        &=q^{(2N+1)j}(q^{2j}c'(N-1,j)+(q^{2j-2} +q^{4N-2})c'(N-1,j-1) + (q^{2j-4} - q^{2N-2})c'(N - 1, j - 2)).
        \end{align*}
    The base case is trivial, and the conclusion follows.
\end{proof}
Notice that, in view of (1.2), (3.13) is consistent with 
\[\sum_{m,n\geq0}D(m,n)x^mq^n=\prod_{i=1}^\infty(1+xq^{2i-1}+x^2q^{4i-2})\]
which is Andrews' refinement of the Alladi-Schur theorem (namely, equation (1.1)) in generating function form.

\section*{Acknowledgment}
I would like to thank my doctoral advisor, Krishnaswami Alladi, for reading my manuscript and providing me with informed suggestions. Likewise, I thank George Andrews for his interest in the problem, which kept me determined to find a solution. Lastly, I sincerely appreciate the referee's helpful suggestions. \\


\bibliographystyle{amsplain}

\end{document}